\crefname{algocf}{alg.}{algs.}
\Crefname{algocf}{Algorithm}{Algorithms}
\newtheorem{lemma}{Lemma}
\newtheorem{theo}{Theorem}
\newtheorem{theorem}{Theorem}
\def\var{\mathbf{Var}}
\newcommand{\esp}{\mathbf{E}}
\def\fcar{\mathds{1}}
\newcommand{\KL}{{\rm KL}}
\newcommand{\bfX}{\mathbf{X}}
\newcommand{\bfM}{\mathbf{M}}
\newcommand{\bfE}{\mathbf{E}}
\newcommand{\bfP}{\mathbf{P}}
\newcommand{\bfQ}{\mathbf{Q}}
\newcommand{\bs}{\boldsymbol}
\newcommand{\bX}{\bs X}
\newcommand{\bZ}{\bs Z}
\newcommand{\bL}{\bs L}
\newcommand{\bF}{\bs F}
\newcommand{\bv}{\bs v}
\newcommand{\bk}{\bs k}
\newcommand{\bzeta}{\bs\zeta}
\newcommand{\bepsilon}{\bs\epsilon}
\newcommand{\bnu}{\bs\nu}
\newcommand{\bmu}{\bs\mu}
\newcommand{\bxi}{\bs\xi}
\newcommand{\bXi}{\bs\Xi}
\newcommand{\btheta}{\bs\theta}
\newcommand{\eeta}{\bs\eta}
\newcommand{\bfTheta}{\mathbf \Theta}
\newcommand{\RR}{\mathbb R}
\newcommand{\NN}{\mathbb N}
\def\calM{\mathcal M}
\def\calI{\mathcal I}
\def\LGHT{\hat\bL{}^{\rm GHT}}
\def\ie{\textit{i.e., }}
\def\hat{\widehat}
\definecolor{gris25}{gray}{0.90}
\begin{document}

\begin{frontmatter}

% "Title of the paper"
\title{Estimating linear functionals of a sparse family of Poisson means}
\runtitle{Estimating linear functionals of Poisson means}

\begin{aug}
\author{\fnms{Olivier} \snm{Collier,}\ead[label=e3]{}}
\author{\fnms{Arnak S.} \snm{Dalalyan}\ead[label=e2]{arnak.dalalyan@ensae.fr}}

\runauthor{O.\ Collier and A.\ Dalalyan}

\affiliation{Modal'X, Universit\'e Paris-Nanterre and CREST, ENSAE}

\address{Modal'X, UPL, Univ Paris Nanterre, F92000 Nanterre France.}
\address{3 avenue Pierre Larousse,
92245 Malakoff, France.}
\end{aug}

\begin{abstract}
Assume that we observe a sample of size $n$ composed of $p$-dimensional signals, each signal having independent
entries drawn from a scaled Poisson distribution with an unknown intensity. We are interested in estimating
the sum of the $n$ unknown intensity vectors, under the assumption that most of them coincide with a
given ``background'' signal. The number $s$ of $p$-dimensional signals different from the background 
signal plays the role of sparsity and the goal is to leverage this sparsity assumption in order to
improve the quality of estimation as compared to the naive estimator that computes the sum of the 
observed signals. We first introduce the group hard thresholding estimator and analyze its mean squared
error measured by the squared Euclidean norm. We establish a nonasymptotic upper bound showing that
the risk is at most of the order of $\sigma^2(sp+s^2\sqrt{p}\,\log^{3/2}(np))$. We then 
establish lower bounds
on the minimax risk over a properly defined class of collections of $s$-sparse signals. These lower bounds
match with the upper bound, up to logarithmic terms, when the dimension $p$ is fixed or of larger order than
$s^2$. In the case where the dimension $p$ increases but remains of smaller order than $s^2$, our results 
show a gap between the lower and the upper bounds, which can be up to order $\sqrt{p}$. 
\end{abstract}

\begin{keyword}[class=MSC]
\kwd[Primary ]{62J05}
\kwd[; secondary ]{62G05}
\end{keyword}

\begin{keyword}
\kwd{Nonasymptotic minimax estimation, linear functional, 
group-sparsity, thresholding, Poisson processes}
\end{keyword}

\end{frontmatter}

%----------------------------------------------------------------------
%\newpage

\section{Introduction and problem formulation}

Let $\bmu=(\mu_1,\ldots,\mu_p)^\top$ be any vector from $\RR^p$ with positive entries. In what follows, we denote 
by $\mathcal P_p(\bmu)$ the distribution of a random vector $\bX$ with independent entries
$X_j$ drawn from a Poisson distribution $\mathcal P(\mu_j)$, for $j=1,\ldots,p$. We consider 
that the available data is composed of $n$ independent vectors $\bX_1,\ldots,\bX_n$ 
randomly drawn from $\RR^p$ such that
\begin{align}
\bX_{i} \stackrel{iid}{\sim}\mathcal \sigma^{2}\mathcal P_p(\sigma^{-2}\bmu_{i}),\qquad i=1,\ldots,n,
\end{align}
where $\sigma>0$ is a known parameter that can be interpreted as the noise level. The use of this term
is justified by the fact that when $\sigma$ goes to zero, we have 
$\sigma^{2}\mathcal P_p(\sigma^{-2}\bmu) \approx \mathcal N_p\big(\bmu, \sigma^2\text{diag}(\bmu)\big)$. 
We will work under the assumption that most intensities are known and equal to a given vector $\bmu_0$, 
which can be thought of as a background signal. We denote by 
$\bfX= [\bX_1,\ldots,\bX_n]$ the $p\times n$ matrix obtained by concatenating the vectors $\bX_i$ for $i=1,\ldots,n$.

\textbf{Assumption (S):} \textit{for a given vector $\bmu_0\in(0,\infty)^p$, the set 
$S=\{i:\bmu_i\not=\bmu_0\}$ is a very small subset of $[n]:=\{1,\ldots,n\}$.}

The unknown parameter in this problem is the $p\times n$ matrix $\bfM = [\bmu_1,\ldots,\bmu_n]$
obtained by concatenating the signal vectors $\bmu_i$. However, we will not aim at estimating the 
matrix $\bfM$. Instead, we aim at estimating a linear functional of the intensities: 
\begin{align}
\bL(\bfM) = \sum_{i\in[n]} (\bmu_i-\bmu_0) = \sum_{i\in S} (\bmu_i-\bmu_0) = \bfM \mathbf 1_n - n\bmu_0.
\end{align}
This functional has a clear meaning: it is the superposition of all the signals contained in $\bX_i$'s
beyond the background signal $\bmu_0$. Notice that if we knew the set $S$, it would be possible to use 
the oracle 
\begin{align}\label{oracle}
\hat\bL_S = \sum_{i\in S} (\bX_i-\bmu_0),
\end{align}
which would lead to the risk
\begin{align}
\bfE\big[\|\hat\bL_S-\bL(\bfM)\|_2^2\big] = \sum_{i\in S}\sum_{j\in[p]} \var(X_{ij}) = \sigma^2
\sum_{i\in S} \bmu_i^\top \mathbf 1_p\le \mu_\infty\sigma^2 sp ,
\end{align}
where $\mu_\infty$ is an upper bound on $\max_{i}\|\bmu_i\|_{\infty}$ and $s$ is the cardinality of $S$. On the other hand, one can use
the naive estimator 
$$
\hat\bL_{[n]} = \sum_{i\in [n]} (\bX_i-\bmu_0) = \bL(\bfX)
$$
which does not use at all the sparsity assumption (S). It has a quadratic risk given by
\begin{align}
\bfE\big[\|\hat\bL_{[n]}-\bL(\bfM)\|_2^2\big] = \sum_{i\in [n]}\sum_{j\in[p]} \var(X_{ij}) = \sigma^2
\sum_{i\in [n]} \bmu_i^\top \mathbf 1_p\le \mu_\infty\sigma^2 n p.
\end{align}

One can remark that the rate  $\sigma^2 sp$ we have got for the oracle is independent of $n$, which 
obviously can not continue to be true when $S$ is unknown and should somehow be inferred from the data.
An important statistical question  is thus the following. 

\textbf{Question (Q1):} \textit{What kind of quantitative improvement can we get upon the rate $\sigma^2 n p$ of 
the naive estimator by leveraging the sparsity assumption}? 

In the framework considered in the present work, for answering Question (Q1), we are allowed to use any 
estimator of $\bL(\bfM)$ which requires only the knowledge of $\sigma$ and $\bmu_0$. However, in practice,
it is common to give advantage to estimators having small computational complexity.  Therefore, another 
question to which we will give a partial answer in this work is:

\textbf{Question (Q2):} \textit{What kind of quantitative improvement can we get upon the rate $\sigma^2 n p$ of 
the naive estimator by leveraging the sparsity assumption and by constraining ourselves to estimators that are 
computable in polynomial time}? 

In the statement of the last question, the term ``polynomial time'' refers to the running time as a function of
$s$, $n$ and $p$.

\subsection{Relation to previous work}

Various aspects of the problem of estimation of a linear functional of an unknown high-dimensional or even 
infinite-dimensional parameter were studied in the literature, mostly focusing on the case of a functional
taking real values (as opposed to the vector valued functional considered in the present work). Early results 
for smooth functionals were obtained by \cite{Koshevnik}. Minimax estimation of linear functionals 
over various classes and models were thoroughly analyzed by \cite{donoho1987minimax, klemela2001,
Efromovich1994,Gol04,cai2004,cai2005, laurent2008,butucea2009,juditsky2009}. 
There is also a vast literature on studying the problem of estimating quadratic functionals 
\citep{DONOHO1990290,laurent2000,cai2006,BickelRitov}. 
Since the estimators of (quadratic) functionals can be often used as test statistics, the problem of 
estimating functionals has close relations with the problem of testing that were successfully exploited 
in \citep{comminges2012, comminges2013,CD11c,Lepski1999}. The problem of estimation of nonsmooth 
functionals was also tackled in the literature, see \citep{cai2011}. 

Most papers cited above deal with the Gaussian white noise model, Gaussian sequence model, or 
the model in which the observations are iid with a common (unknown) density function. Surprisingly, 
the results on estimation of functionals of the intensity of a Poisson process are very scarce. (We are only aware of the paper \citep{KUTOYANTS199843}, which in an asymptotic set-up established the asymptotic normality and efficiency of a natural plug-in estimator of a real valued smooth linear functional.) 
This is surprising, given the relevance of the Poisson distribution for modeling many real data sets arising, for instance, in image processing and biology. Furthermore, there are by now several 
comprehensive studies of nonparametric estimation and testing for Poisson processes, see 
\cite{Ingster2007,kutoyants1998,birge,reynaud-bouret2010} and the references therein.  
In addition, several statistical experiments related to Poisson processes were proved to be asymptotically 
equivalent to the Gaussian white noise or other important statistical 
experiments \citep{brown2004,Grama1998}.  However, the models related to the Poisson distribution demonstrate some notable differences as compared 
to the Gaussian models. From a statistical point of view, the most appealing differences are the heteroscedasticity of the Poisson 
distribution\footnote{By heteroscedasticity we understand here the fact that if the mean is unknown, then 
the variance is unknown as well and these two parameters are interrelated} and the fact that it has heavier 
tails than the Gaussian distribution. The impact of these differences on the statistical problems is amplified in high dimensional settings, as it can be seen in \citep{Jia,Ivanoff}. 

To complete this section, let us note that the investigation of the statistical problems related to functionals of
high-dimensional parameters under various types of sparsity constraints was recently carried out in several papers. 
The case of real valued linear and quadratic functionals was studied by \cite{collier2017} and \cite{ColComTV}, focusing on the Gaussian sequence model. \cite{Gassiat} analyzed the problem of the signal-to-noise ratio estimation in the linear regression model under various assumptions on the design. In a companion paper, \cite{CollierDal2} consider the problem of a vector valued linear functional estimation in the Gaussian sequence model. The relations with this work will be discussed in more details in \Cref{sec:conc} below.

\subsection{Agenda}

The rest of the paper is organized as follows. In \Cref{sec:GHT}, we introduce an estimator of the functional 
$\bL(\bfM)$ based on thresholding the columns of $\bfX$ with small Euclidean norm. The same section contains the
statement and the proof of a nonasymptotic upper bound on the expected risk of the aforementioned estimator. 
Lower bounds on the risk are presented in \Cref{sec:lower}. Some avenues for further research and a summary of
the contributions of the present work are given in \Cref{sec:conc}. The proofs of the technical lemmas are gathered
in \Cref{ssec:lemproofs}.

\subsection{Notation}
We use uppercase boldface letters for matrices and uppercase italic boldface letters for vectors. 
For any vector $\bv$ and for every $q\in(0,\infty)$, we use the standard definitions of the $\ell_q$-norms $\|\bv\|_q^q = \sum_j |v_j|^q$, along with $\|\bv\|_\infty = \max_j |v_j|$. For every $n\in\NN$, $\mathbf 1_n$ (resp.\ $\mathbf 0_n$) is the vector in $\RR^n$ all the entries of which are equal to $1$ (resp.\ $0$). For a matrix $\bfM$, we denote by 
$\|\bfM\|$ the spectral norm of  $\bfM$ (equal to its largest singular value),  and by $\|\bfM\|_F$ its Frobenius norm (equal to the Euclidean norm of the vector composed of its singular values). For any matrix $\bfM$, we denote by $\bL(\bfM)$ 
the linear functional defined as the sum of the columns of $\bfM$. 

\section{Group hard thresholding estimator}\label{sec:GHT}
This section describes the group hard thresholding estimator, that is computationally tractable and achieves the optimal convergence rate in a large variety of cases. The underlying idea is rather simple: the Euclidean distance between each signal $\bX_i$ and the background signal $\bmu_0$ is compared 
to a threshold; the sum of all the signals for which the 
distance exceeds the threshold is the retained estimator. 

Thus, the group hard thresholding method analysed in this section can be seen as a two-step estimator, that 
first estimates the set $S$ and then substitutes the estimator of $S$ in the oracle
$\hat\bL_S$ considered in~\eqref{oracle}. To perform the first step, we choose a threshold $\lambda >0$ 
and define 
\begin{align}\label{hatS}
\hat S(\lambda) = \Big\{i\in [n] : \|\bX_i-\bmu_0\|_2 \ge \sigma(\|\bX_i\|_1+\lambda)^{1/2}\Big\}.
\end{align}
We will refer to the plug-in estimator 
$\hat\bL_{\hat S(\lambda)}$ as the group hard thresholding 
estimator, and will denote it by
\begin{align}\label{GHT}
\LGHT = \sum_{i\in \hat S(\lambda)} (\bX_i-\bmu_0).
\end{align}
It is clear that the choice of the threshold $\lambda$ has a strong impact on the statistical accuracy of the
estimator $\LGHT$. One can already note that the threshold to 
which the distance $\|\bX_i-\bmu_0\|_2$ is compared depends on 
the signal $\bX_i$; this is due to the heteroscedasticity of
the models based on observations drawn from the  Poisson 
distribution.

Prior to describing in a quantitative way the impact of the threshold on the expected risk, let us emphasize that the computation of $\LGHT$ requires 
$O(np)$ operations; so it can be done in polynomial time. Indeed, the 
computation of each of the $n$ norms $\|\bX_i-\bmu_0\|_2$ requires $O(p)$ operations, so $\hat S(\lambda)$ can be
computed using $O(np)$ operations. The same is true for the sum in the right hand side of~\eqref{GHT}.

The following theorem establishes the performance of the previously defined thresholding estimator, for a
suitably chosen threshold $\lambda$.
\begin{theo}\label{theorem_HT}
	Define $\mu_\infty = \max_{i,j} |\bfM_{ij}|$ and suppose 
	$\sigma^3\mu_\infty^{-3/2} \le 40 \log^{3/2}({2np}) 
	\le 0.9 \sigma^{-3}\mu_\infty^{3/2} $. The group hard thresholding estimator \eqref{hatS}-\eqref{GHT} with
	\begin{equation}\label{lambda}
		\lambda = 40 \|\bmu_0\|_\infty \sqrt{p}\, \log^{3/2}({2np}),
	\end{equation}
	has a mean squared error bounded as follows:
	\begin{equation}
		\esp\big[\| \LGHT-\bL(\bfM) \|_2^2\big] \le \mu_\infty \sigma^2 \Big(
		170 s^2\sqrt{p}\,\log^{3/2}({2np}) + 6 sp\Big).
	\end{equation}
\end{theo}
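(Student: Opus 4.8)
The plan is to compare $\LGHT$ with the oracle $\hat\bL_S$ from \eqref{oracle} and to organise the error according to the three ways in which $\widehat S(\lambda)$ can differ from the true support $S$. Writing $\bZ_i=\bX_i-\bmu_i$ for the centred noise and $\bs\delta_i=\bmu_i-\bmu_0$ for the signal strength (so that $\bs\delta_i=\bzero$ exactly when $i\in S^c$), and letting $\widehat\delta_i=\fcar(i\in\widehat S(\lambda))$, I would first establish the decomposition
\begin{align*}
\LGHT-\bL(\bfM)=\underbrace{\sum_{i\in S^c}\widehat\delta_i\,\bZ_i}_{T_1}+\underbrace{\sum_{i\in S}\widehat\delta_i\,\bZ_i}_{T_2}-\underbrace{\sum_{i\in S}(1-\widehat\delta_i)\,\bs\delta_i}_{T_3},
\end{align*}
so that $T_1$ gathers the false positives, $T_2$ the oracle-type noise on the detected signals, and $T_3$ the false negatives. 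Applying $\|a+b+c\|_2^2\le 3(\|a\|_2^2+\|b\|_2^2+\|c\|_2^2)$ reduces the theorem to bounding $\esp\|T_k\|_2^2$ for $k=1,2,3$.

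The conceptual key, which I would isolate first, is that the data-dependent threshold in \eqref{hatS} exactly recentres the test statistic. Indeed, setting $G_i=\|\bX_i-\bmu_0\|_2^2-\sigma^2\|\bX_i\|_1$, the membership rule reads $\widehat\delta_i=\fcar(G_i\ge\sigma^2\lambda)$, and a direct moment computation using $\var(X_{ij})=\sigma^2\mu_{ij}$ gives $\esp[G_i]=\|\bs\delta_i\|_2^2$. Thus $G_i$ is an unbiased estimator of the squared signal strength, regardless of the unknown, heteroscedastic variances; this is precisely what makes the rule sensible. Moreover $G_i-\esp[G_i]=\sum_{j\in[p]}\big[(Z_{ij}^2-\esp Z_{ij}^2)+(2\delta_{ij}-\sigma^2)Z_{ij}\big]$ is a sum of $p$ independent, mean-zero terms, each a function of the single Poisson coordinate $X_{ij}$. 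Everything then rests on a deviation inequality controlling $\prob(|G_i-\esp G_i|\ge\tfrac12\sigma^2\lambda)$.

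I expect this deviation bound to be the main obstacle. The summands mix a linear-in-Poisson part, which is sub-exponential, with a squared-Poisson part $Z_{ij}^2$, whose tails are heavier; a Gaussian (sub-$\chi^2$) bound would already suffice with a threshold of order $\sqrt{p}\,\log^{1/2}(2np)$, and the extra power present in $\log^{3/2}(2np)$ in \eqref{lambda} is exactly the price of the sub-exponential regime. Using the variance $\var(Z_{ij}^2)=\sigma^6\mu_{ij}+2\sigma^4\mu_{ij}^2$ together with the two-sided constraint relating $\sigma^3\mu_\infty^{-3/2}$, $\log^{3/2}(2np)$ and $\sigma^{-3}\mu_\infty^{3/2}$ (which places the problem in the regime $\mu_\infty/\sigma^2\gtrsim\log(2np)$, where the squared term dominates the variance and the scaled Poisson concentrates well), I would prove, via a Bernstein/Bennett-type argument tailored to the Poisson moment growth, that $\prob(G_i-\esp G_i\ge\tfrac12\sigma^2\lambda)\le(2np)^{-c}$ with $c$ large, and symmetrically for the lower tail. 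Calibrating the constants in this inequality is what fixes the numbers $40$ and $0.9$ in the hypotheses and $170$ in the conclusion.

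With the deviation bound in hand the three terms are routine. For $T_1$ ($i\in S^c$, so $\bs\delta_i=\bzero$ and $\esp G_i=0$): on $\{\widehat\delta_i=1\}$ one has $\|\bZ_i\|_2^2=G_i+\sigma^2\|\bX_i\|_1$, and Cauchy--Schwarz gives $\esp[\widehat\delta_i\|\bZ_i\|_2^2]\le(\esp\|\bZ_i\|_2^4)^{1/2}\,\prob(\widehat\delta_i=1)^{1/2}$, which is negligible after summing over the at most $n$ background indices because the probability is $(2np)^{-c}$; the cross terms are handled identically, so $\esp\|T_1\|_2^2$ is lower order. For $T_2$ I would avoid the crude Cauchy--Schwarz bound (which would cost an extra factor $s$) and instead expand the square and use independence across $i$: the diagonal terms give $\sum_{i\in S}\esp[\widehat\delta_i\|\bZ_i\|_2^2]\le\sum_{i\in S}\esp\|\bZ_i\|_2^2\le\mu_\infty\sigma^2 sp$, the oracle rate, while the off-diagonal terms $\esp[\widehat\delta_i\bZ_i]^\top\esp[\widehat\delta_{i'}\bZ_{i'}]$ are controlled through $\|\esp[\widehat\delta_i\bZ_i]\|_2\le\esp[(1-\widehat\delta_i)\|\bZ_i\|_2]$ and the miss probabilities, contributing only to the $s^2$ term. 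Finally, for $T_3$ I would use $\esp\|T_3\|_2^2\le s\sum_{i\in S}\prob(i\notin\widehat S)\,\|\bs\delta_i\|_2^2$ together with the dichotomy coming from $\esp G_i=\|\bs\delta_i\|_2^2$: when $\|\bs\delta_i\|_2^2\le\sigma^2\lambda$ the summand is at most $\sigma^2\lambda$ outright, while when $\|\bs\delta_i\|_2^2>\sigma^2\lambda$ a miss forces $G_i-\esp G_i<-\tfrac12\|\bs\delta_i\|_2^2$, an event of probability $(2np)^{-c}$ that kills the polynomially bounded factor $\|\bs\delta_i\|_2^2\le 4p\mu_\infty^2$. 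Hence $\esp\|T_3\|_2^2\lesssim s^2\sigma^2\lambda\asymp\mu_\infty\sigma^2 s^2\sqrt{p}\,\log^{3/2}(2np)$, which is the dominant term. Collecting the three bounds and absorbing the lower-order contributions into the constants yields the stated inequality.
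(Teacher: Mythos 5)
Your skeleton is sound and close to the paper's: the decomposition into detected-signal noise, false negatives, and false positives is exactly the paper's decomposition \eqref{decomposition}, the observation that $G_i=\|\bX_i-\bmu_0\|_2^2-\sigma^2\|\bX_i\|_1$ satisfies $\esp[G_i]=\|\bmu_i-\bmu_0\|_2^2$ is precisely what motivates the data-dependent threshold, and your treatment of the false positives (Cauchy--Schwarz plus a null-case deviation bound of polynomial level $(2np)^{-c}$, here the paper's Lemma~\ref{lem:1}) matches the paper's treatment of its term $T_3$. However, two of your steps have genuine gaps.

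First, the off-diagonal terms in your $T_2$. You claim they are controlled through $\|\esp[\hat\delta_i\bZ_i]\|_2\le\esp[(1-\hat\delta_i)\|\bZ_i\|_2]$ ``and the miss probabilities.'' But for borderline signals with $\|\bmu_i-\bmu_0\|_2^2\asymp\sigma^2\lambda$, neither the miss probability nor the detection probability is small --- both are of order one --- so Cauchy--Schwarz only yields $\|\esp[\hat\delta_i\bZ_i]\|_2\lesssim\sigma(\mu_\infty p)^{1/2}$, and the $s^2$ off-diagonal terms sum to as much as $s^2\sigma^2\mu_\infty p$. This exceeds the theorem's bound $\mu_\infty\sigma^2\big(170s^2\sqrt{p}\log^{3/2}(2np)+6sp\big)$ whenever $\sqrt{p}\gg\log^{3/2}(2np)$. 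The correlation between the selection event $\hat\delta_i$ and the noise $\bZ_i$ is exactly the delicate point, and plain Cauchy--Schwarz cannot resolve it. The paper sidesteps it with an operator-norm decoupling: writing $\bXi_S=[\bxi_i:i\in S]$, it bounds $\big\|\sum_{i\in S\cap\hat S(\lambda)}\bxi_i\big\|_2^2=\|\bXi_S\mathbf 1_{\hat S(\lambda)}\|_2^2\le s\|\bXi_S\|^2$, which removes the random set entirely, and then controls $\bfE\|\bXi_S\|^2$ via the Frobenius norm, accepting the cost $\mu_\infty(sp+2s^2\sqrt{p})$ --- harmless, since the false-negative term is of that order anyway.

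Second, your dichotomy for the false negatives is calibrated at the wrong scale. The threshold $\sigma^2\lambda=40\sigma^2\|\bmu_0\|_\infty\sqrt{p}\log^{3/2}(2np)$ is tuned to the fluctuations of $G_i$ under the \emph{null} ($\bmu_i=\bmu_0$); but for $i\in S$ the variance of $G_i$ contains $2\sigma^4\sum_j\mu_{ij}^2$, i.e.\ $G_i$ fluctuates at scale $\sigma^2\mu_\infty\sqrt{p}$, and $\mu_\infty$ can be arbitrarily larger than $\|\bmu_0\|_\infty$. Hence your claim that a miss with $\|\bmu_i-\bmu_0\|_2^2>\sigma^2\lambda$ is an event of probability $(2np)^{-c}$ is false in the range $\sigma^2\lambda<\|\bmu_i-\bmu_0\|_2^2\ll\sigma^2\mu_\infty\sqrt{p}\log^{3/2}(2np)$, where the miss probability can be of order one. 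The fix is to place the dichotomy at $C\sigma^2\mu_\infty\sqrt{p}\log^{3/2}(2np)$ (the final rate is unchanged), but then you still must prove a \emph{lower}-tail deviation inequality for non-null $G_i$ that also handles the cross term $2(\bmu_i-\bmu_0)^\top\bZ_i$ --- strictly more than the null-case concentration you planned to establish. The paper avoids any tail bound for $i\in S$ altogether: on the miss event, the defining inequality of $\hat S(\lambda)$ gives pointwise $\|\bmu_i-\bmu_0\|_2^2<2\tau_i-2\sigma^2\|\bxi_i\|_2^2+4\sigma^2(\btheta_i^\top\bxi_i)^2$ with $\tau_i=\sigma^2(\|\bX_i\|_1+\lambda)$, and the expectation of the right-hand side is then bounded by elementary moment computations, which is both simpler and immune to the calibration issue.
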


\begin{proof}
Let us introduce the auxiliary notation $\bxi_i = \sigma^{-1}(\bX_i-\bmu_i)$ and write the 
decomposition
\begin{align}
	\LGHT-\bL(\bfM) 
		&= \sum_{i\in S} (\bX_i-\bmu_i) - \sum_{i\in S\setminus \hat S(\lambda)} (\bX_i-\bmu_0)+
			 \sum_{i\in \hat S(\lambda)\setminus S} (\bX_i-\bmu_0)\\
		&= \sum_{i\in S\cap \hat S(\lambda)} \sigma\bxi_i - \sum_{i\in S\setminus \hat S(\lambda)} (\bmu_i-\bmu_0)+
			 \sum_{i\in \hat S(\lambda)\setminus S} \sigma\bxi_i.\label{decomposition}
\end{align}
 We have
	\begin{align}\label{ineq:2}
	\big\| \LGHT-\bL(\bfM) \big\|_2 
		&\le \sigma\underbrace{\Big\| \sum_{i\in S\cap \hat S(\lambda)} \bxi_i \Big\|_2}_{T_1} + 
				 \underbrace{\Big\|\sum_{i\in S\setminus\hat S(\lambda)} (\bmu_i-\bmu_0)\Big\|_2}_{T_2} 
				 + \sigma \underbrace{\Big\|\sum_{i\in \hat S(\lambda)\setminus S} \bxi_i\Big\|_2}_{T_3}.
	\end{align}
Let us use the notation 
	$\bXi_S = [\bxi_i:i\in S]$ for the matrix	obtained by concatenating the vectors $\bxi_i$ with $i\in S$, 
	and denote by $\bxi_S^j$ its $j$-th row. We can write
	\begin{equation}
	T_1^2 = \|\bXi_S\mathbf 1_{\hat S(\lambda)}\|_2^2 \le  s\|\bXi_S\|^2
	    = s\bigg\|\sum_{j\in [p]} (\bxi_S^j)^\top\bxi_S^j\bigg\|,
	\end{equation}
	where $\mathbf 1_{\hat S(\lambda)}$ is the vector in $\{0,1\}^S$ having zeros at positions 
	$i\in \hat S(\lambda)$ and zeros elsewhere. One can observe that the $s\times s$ matrices 
	$(\bxi_S^j)^\top\bxi_S^j$ are independent and each of them is in expectation equal to the diagonal matrix
	$\text{diag}(\bmu_S^j)$. Since the operator norm of a diagonal matrix is equal to its largest (in absolute value) 
	entry, we have 
	\begin{align}
	\bfE\big[T_1^2\big]
			&\le \mu_\infty sp + s\bfE\bigg[\Big\|\sum_{j\in [p]} \big\{(\bxi_S^j)^\top\bxi_S^j-
					{\rm diag}(\bmu_S^j)\big\}\Big\|\bigg]\\
			&\le \mu_\infty sp+ s\bigg\{\bfE\bigg[\Big\|\sum_{j\in [p]} 
					\big\{(\bxi_S^j)^\top\bxi_S^j-{\rm diag}(\bmu_S^j)\big\}\Big\|^2\bigg]\bigg\}^{1/2}.						
	\end{align}		
Upper bounding the spectral norm by the Frobenius norm, we arrive at
	\begin{align}
	\bfE\bigg[\Big\|\sum_{j\in [p]} \big\{(\bxi_S^j)^\top\bxi_S^j-{\rm diag}(\bmu_S^j)\big\}\Big\|^2\bigg]
			&\le \bfE\bigg[\Big\|\sum_{j\in [p]} \big\{(\bxi_S^j)^\top\bxi_S^j-{\rm diag}(\bmu_S^j)\big\}\Big\|^2_F\bigg]\\
			&= \sum_{j\in [p]} \bfE\big[\big\|(\bxi_S^j)^\top\bxi_S^j-{\rm diag}(\bmu_S^j)\big\|^2_F\big].			
	\end{align}		
Using the fact that the fourth order central moment of the Poisson distribution 
$\mathcal P(a)$ is $a+3a^2$ and that $\xi_{ij} \sim \sigma(\mathcal P(\sigma^{-2}\mu_{0j})
-\sigma^{-2}\mu_{0j})$, one can check that 
\begin{align}\label{xiij}
\bfE[\xi_{ij}^4]= \sigma^2\mu_{ij}+3\mu_{ij}^2\le (2\mu_\infty)^2,
\end{align}
where the last inequality holds true since $\sigma^2\le (0.9)^{1/3}\mu_\infty$. 

In view of \eqref{xiij} and the Cauchy-Schwarz inequality, it holds that
$\bfE[\|(\bxi_S^j)^\top\bxi_S^j-\text{diag}(\bmu_S^j)\|^2_F] \le 2.5s\mu_\infty^2+(s^2-s)\mu_{\infty}^2
\le (s+1)^2\mu_\infty^2$. Putting these estimates together, we get
\begin{equation}
	\bfE\big[T_1^2\big] 
			\le \mu_\infty sp + \mu_\infty s(s+1)\sqrt{p}\le \mu_\infty sp+ 2\mu_\infty s^2\sqrt{p}.
			\label{eqn4}
\end{equation}	
We would like to stress here that this upper bound is quite rough, because of the step in which 
we upper bounded the spectral norm by the Frobenius one. Using more elaborated arguments on the expected
spectral norm of a random matrix, such as the Rudelson inequality \cite{Rudelson} and \citep[Cor. 13.9]{boucheron2013concentration} combined with
the symmetrisation argument \citep[Theorem 2.1]{Koltch}, it is possible to replace
the rate $s^2\sqrt{p}$ by $s(\sqrt{s}+\sqrt{p})$. However, we do not give more details on this computations 
since the rate $s^2\sqrt{p}$ appears anyway as an upper bound on $\bfE[T_2]$.

To upper bound the term $T_2$ in \eqref{ineq:2}, we define $ \tau_i = \sigma^2(\|\bX_i\|_1+\lambda)$ and
$u=40\log^{3/2}(2np)$, so that $\tau_i = \sigma^2(\|\bX_i\|_1+\|\bmu_0\|_\infty\sqrt{p}\,u)$.  Note that $i\in S\setminus\hat S(\lambda)$ implies
	\begin{align}
	\|\bmu_i-\bmu_0\|_2^2
			&<\tau_i-\sigma^2\|\bxi_i\|^2	-2\sigma(\bmu_i-\bmu_0)^\top\bxi_i \phantom{\frac{|(\bmu_i-\bmu_0)^\top\bxi_i|^2}{\|\bmu_i-\bmu_0\|_2^2}}\\
			&<\tau_i-\sigma^2\|\bxi_i\|^2	+2\sigma|(\bmu_i-\bmu_0)^\top\bxi_i| \phantom{\frac{|(\bmu_i-\bmu_0)^\top\bxi_i|^2}{\|\bmu_i-\bmu_0\|_2^2}}\\
			&<\tau_i-\sigma^2\|\bxi_i\|^2	+0.5\|\bmu_i-\bmu_0\|_2^2+ 
			2\sigma^2\frac{|(\bmu_i-\bmu_0)^\top\bxi_i|^2}{\|\bmu_i-\bmu_0\|_2^2}.
	\end{align}
	In other terms, setting $\btheta_i = (\bmu_i-\bmu_0)/\|\bmu_i-\bmu_0\|_2$, we have 
	\begin{align}
	S\setminus\hat S(\lambda) \subset \bigg\{\|\bmu_i-\bmu_0\|_2^2
			&<2\tau_i-2\sigma^2\|\bxi_i\|^2	+
			4\sigma^2|\btheta_i^\top\bxi_i|^2\bigg\}.
	\end{align}
	As a consequence,
	\begin{align}
	T_2^2 &\le s \sum_{i\in S\setminus\hat S(\lambda) } \|\bmu_i-\bmu_0\|_2^2 \\
			&\le 2\mu_\infty s^2\sigma^2\sqrt{p}\,u+2\sigma^2s\sum_{i\in S}  (\|\bX_i\|_1-\|\bxi_i\|_2^2)_+
			+4\sigma^2s \sum_{i\in S} (\btheta_i^\top\bxi_i)^2.
	\end{align}
 Taking the expectation of both sides, in view of the independence of the components of $\bxi_i$ and those of $\bX_i$
and the condition $\sigma^2\le \mu_\infty$, we arrive at
	\begin{align}
	\bfE\big[T_2^2\big]  
	&\le 2\mu_\infty \sigma^2s^2\sqrt{p}\,u+2\mu_\infty\sigma^2s^2 \sqrt{6p}\,
			+4\mu_\infty\sigma^2s^2\\
	&\le 89\mu_\infty \sigma^2s^2\sqrt{p}\,\log^{3/2}({2np}).\label{eqn3}
	\end{align}		
The third term in \eqref{ineq:2} corresponds to the second kind error in the problem of support estimation and its expectation can be upper bounded using the Cauchy-Schwarz inequality as follows
\begin{align}
	\bfE[T_3^2] 
		&\le n\sum_{i\not\in S} \bfE[\|\bxi_i\|_2^2\fcar_{\|\bxi_i\|^2_2\ge \tau_i/\sigma^2}]\\
		&= n\sum_{i\not\in S}\sum_{j\in[p]} \bfE[\xi_{ij}^2\fcar_{\|\bxi_i\|^2_2\ge \tau_i/\sigma^2}]\\
		&\le n\sum_{i\not\in S}\sum_{j\in[p]} \big\{\bfE[\xi_{ij}^4]\bfP(\|\bxi_i\|^2_2\ge \tau_i/\sigma^2)\big\}^{1/2}.
\end{align}
In view of \eqref{xiij}, this yields
\begin{align}\label{T3b}
	\bfE[T_3^2] 
		&\le 2\mu_\infty np\sum_{i\not\in S}\bfP(\|\bxi_i\|^2_2\ge \tau_i/\sigma^2)^{1/2}.
\end{align}
For the last probability, we have
\begin{align}
\bfP(\|\bxi_i\|^2_2\ge \tau_i/\sigma^2)
		& = \bfP(\|\bxi_i\|^2_2 - \|\bX_i\|_1 \ge \|\bmu_0\|_\infty \sqrt{p}\, u).
\end{align}
To upper bound this probability, we apply \Cref{lem:1} below\footnote{The proof of \Cref{lem:1} 
is postponed to \Cref{ssec:lemproofs}} to 
$\eeta = \sigma^{-2}\bX_j$ and $\bnu= \sigma^{-2}\bmu_j$.
\begin{lemma}\label{lem:1}
Let $\eta_1,\ldots,\eta_p$ be independent random variables drawn from the Poisson 
distributions $\mathcal P(\nu_1),\ldots,\mathcal P(\nu_p)$, respectively. We set 
$\eeta= (\eta_1,\ldots,\eta_p)$ and $\bnu = (\nu_1,\ldots,\nu_p)$. Then, 
for every $u$ satisfying $\nu_\infty^{-3/2}\le u\le 0.9\nu_{\infty}^{3/2}$, we have 
\begin{align}
\bfP\Big(\|\eeta-\bnu\|_2^2-\|\eeta\|_1\ge \nu_\infty\sqrt{p}\,u\Big) 
		\le  (2p+1)e^{-cu^{2/3}}.
\end{align}
where $c\ge 0.38$ is a universal constant.
\end{lemma}

This yields that for $\sigma^3\mu_\infty^{-3/2}\le u\le 0.9 \sigma^{-3}\mu_\infty^{3/2}$, 
we have
\begin{align}
\bfP(\|\bxi_i\|^2_2\ge \tau_i/\sigma^2)
		&\le (2p+1)e^{-cu^{2/3}},
\end{align}
where $c\ge 0.38$ is an absolute constant. In view of \eqref{T3b}, this implies the 
following upper bound  on the expectation of $T_3$:
\begin{align}
	\bfE[T_3^2] 
		&\le 2 \mu_\infty n^2p(2p+1)^{1/2}e^{-0.5cu^{2/3}}\\
		&\le 4\mu_\infty n^2p^2 e^{-0.5cu^{2/3}}.
\end{align}
The choice  $u = 40 \log^{3/2}(2np)$ 
ensures that
\begin{align}
	\bfE[T_3^2] &\le 4\mu_\infty n^2p^2 e^{-2\log({2np})}\le \mu_\infty,\label{T3c}
\end{align}
which is negligible with respect to the upper bound on $\bfE[T_1^2]$ obtained above.

The result follows from upper bounds \eqref{ineq:2}, \eqref{eqn4}, \eqref{eqn3} and \eqref{T3c}, 
as well as the choice of $u$. Indeed, by the Minkowskii inequality, we get
\begin{align}
\Big(\bfE[\|\LGHT-\bL(\bfM)\|_2^2]\Big)^{1/2} \le 11.85 \sigma\mu_\infty^{1/2}sp^{1/4}\log^{3/4}({2np}) + \sigma(\mu_\infty sp)^{1/2}.
\end{align}
Taking the square of both sides and using the inequality $(a+b)^2\le (\nicefrac65)a^2+6b^2$, we get the claim 
of the theorem. 
\end{proof}

\section{Lower bounds on the minimax risk over the sparsity set}\label{sec:lower}

Let us denote by $\calM(n,p,s,\bmu_0,\mu_\infty)$ the set of all $p\times n$ matrices 
with positive entries bounded by $\mu_\infty$ and having at most $s$ columns different 
from $\bmu_0$. This implicitly means, of course, that $\mu_\infty\ge \|\bmu_0\|_\infty$, otherwise the set is empty. The parameter
set $\calM(n,p,s,\bmu_0,\mu_\infty)$ is the sparsity class over which the minimax risk 
will be considered. The result of the previous section implies that the minimax risk
over $\calM(n,p,s,\bmu_0,\mu_\infty)$ is at most of order $\mu_\infty\sigma^2s(p+s\sqrt{p})$, 
up to a logarithmic factor. In the present section, we will establish two lower bounds 
on the minimax risk over $\calM(n,p,s,\bmu_0,\mu_\infty)$.

The first lower bound shows that the minimax risk is at least of order $s^2$, up to log factor. This implies that the group hard thresholding estimator defined in previous section is rate-optimal when $p$ is fixed and $s$ goes to infinity. The proof of this lower bound relies on the comparison of the Kullback-Leibler divergence between the probability distribution of $\bfX$ corresponding to $\bfM_0 = [\bmu_0,\ldots,\bmu_0]$, and the 
one corresponding to a matrix $\bfM$, the $n$ columns of which are randomly drawn from the set $\{\bmu_0,\bmu_1\}$ with proportion $(n-s):s$. When $\bmu_1$ is well chosen, the two
aforementioned distributions are close, whereas the corresponding linear functional $\bL(\bfM_0)$ is sufficiently away from $\bL(\bfM)$. 

\begin{theorem}\label{theorem_lowerbound1}
	Assume that $s\ge 128$, then 
	\begin{equation}
	\inf_{\hat\bL} \sup_{\bfM\in\calM(n,p,s,\mu_0,\mu_\infty)} 
	\esp_\bfM \big\| \hat\bL - \bL(\bfM) \big\|^2 
	\ge \frac{\|\bmu_0\|_\infty}{513}\sigma^2 s^2\log\Big(1+\frac{n}{2s^2}\Big),
	\end{equation}
	where the infimum is taken over all estimators of the linear functional.
\end{theorem}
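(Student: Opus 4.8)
The plan is to prove this lower bound by Le Cam's two-point method in its Bayesian (mixture) form, following the roadmap announced before the statement. The first move is a reduction to a \emph{scalar} problem. Let $j_0\in[n]$... more precisely let $j_0\in\{1,\dots,p\}$ be a coordinate attaining $\mu_{0,j_0}=\|\bmu_0\|_\infty$, and perturb $\bmu_0$ only there: set $\bmu_1=\bmu_0-h\,\bs e_{j_0}$ for a well-chosen $h\in(0,\|\bmu_0\|_\infty)$, where $\bs e_{j_0}$ is the $j_0$-th canonical basis vector, so that $\bmu_1$ remains a positive intensity vector. Since $\bmu_0$ and $\bmu_1$ differ in a single coordinate, the data $X_{i,j}$ with $j\ne j_0$ carry no information about which columns were perturbed, and $\|\hat\bL-\bL(\bfM)\|_2^2\ge(\hat L_{j_0}-L_{j_0}(\bfM))^2$. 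Hence it suffices to lower bound the risk of estimating the scalar functional $L_{j_0}(\bfM)=\sum_i(\mu_{i,j_0}-\mu_{0,j_0})=-hN$, where $N$ is the number of perturbed columns, from the single-coordinate observations $X_{i,j_0}\sim\sigma^2\mathcal P(\sigma^{-2}\mu_{i,j_0})$.

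Next I would set up the two hypotheses. Let $\prob_0$ be the law of the data under $\bfM_0=[\bmu_0,\ldots,\bmu_0]$, for which the functional is $0$. Let $\pi_1$ be the product prior under which each column is independently $\bmu_1$ with probability $\beta=s/n$ and $\bmu_0$ otherwise (the announced proportion $(n-s):s$), and let $\prob_1=\int\prob_\bfM\,d\pi_1$ be the induced mixture law. The key computation is $\chi^2(\prob_1\,\|\,\prob_0)$. As both measures factorize over columns and, after the reduction, over coordinate $j_0$, it equals $\big(1+\beta^2\chi^2(\prob_h\|\prob_0^{\mathrm{sc}})\big)^n-1$ for the scalar marginals $\prob_h,\prob_0^{\mathrm{sc}}$. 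Using the closed form $\chi^2(\mathcal P(\lambda_1)\|\mathcal P(\lambda_0))=\exp\{(\lambda_1-\lambda_0)^2/\lambda_0\}-1$ with $\lambda_0=\sigma^{-2}\|\bmu_0\|_\infty$ and $\lambda_1-\lambda_0=-\sigma^{-2}h$, this becomes $\big(1+\beta^2(e^{h^2/(\sigma^2\|\bmu_0\|_\infty)}-1)\big)^n-1$. Since $n\beta^2=s^2/n$, the choice
\[
h^2\;\asymp\;\sigma^2\,\|\bmu_0\|_\infty\,\log\!\Big(1+\tfrac{n}{2s^2}\Big)
\]
makes $\chi^2(\prob_1\|\prob_0)$ bounded by an absolute constant; the logarithmic factor in the statement is exactly what balances the $s^2/n$ prefactor. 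This also explains why $j_0$ is optimal: a larger background intensity $\lambda_0$ permits a larger perturbation $h$ for a fixed divergence budget, so the bound is driven by $\|\bmu_0\|_\infty$.

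To conclude, I would quantify the separation and invoke a two-point squared-loss inequality. The functional has mean $\bar\phi:=\esp_{\pi_1}[-hN]=-hs$ under $\pi_1$ and equals $0$ under $\prob_0$, giving separation $\bar\phi^2=h^2s^2\asymp\sigma^2\|\bmu_0\|_\infty s^2\log(1+n/(2s^2))$, the target order. The elementary bound $x^2+(x-c)^2\ge c^2/2$ yields $\esp_{\prob_0}[\hat\phi^2]+\esp_{\prob_1}[(\hat\phi-\bar\phi)^2]\ge\tfrac12\bar\phi^2\big(1-\|\prob_0-\prob_1\|_{\mathrm{TV}}\big)$, and $\|\prob_0-\prob_1\|_{\mathrm{TV}}\le\tfrac12\sqrt{\chi^2(\prob_1\|\prob_0)}\le\tfrac12$ keeps the affinity bounded away from $0$. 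It then remains to replace the centered target $\bar\phi$ by the true random functional $\phi=-hN$: from $(\hat\phi-\phi)^2\ge\tfrac12(\hat\phi-\bar\phi)^2-(\phi-\bar\phi)^2$ one loses only $\var_{\pi_1}(\phi)=h^2n\beta(1-\beta)\le h^2 s$. As $\bar\phi^2=h^2s^2$, this fluctuation is a $1/s$ fraction of the main term, so the hypothesis $s\ge128$ guarantees it is absorbed while leaving a positive constant, and tracking constants produces the factor $1/513$.

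The step I expect to be the main obstacle is reconciling two competing requirements in the choice of prior. The clean product form of $\chi^2(\prob_1\|\prob_0)$ relies on $\pi_1$ being an i.i.d.\ Bernoulli$(s/n)$ prior, but such a prior places mass outside the sparsity class $\calM(n,p,s,\bmu_0,\mu_\infty)$ whenever $N>s$, and it renders $\phi=-hN$ random rather than constant. Both issues must be handled at once: the first by restricting $\pi_1$ to $\{N\le s\}$ (an event of probability bounded below, so the divergence and the functional mean change only by absolute constants), and the second by the variance bound above, which is precisely where $s\ge128$ enters. Making these two corrections compatible while retaining the sharp logarithmic dependence coming from the Poisson $\chi^2$ formula is the delicate part of the argument.
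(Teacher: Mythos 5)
Your architecture coincides with the paper's own proof: a Bayes two--point argument whose null is the pure background matrix and whose alternative is an i.i.d.\ Bernoulli mixture over the columns, with the perturbation confined to the coordinate where $\bmu_0$ attains its maximum and calibrated as $\sigma^2\varepsilon^2\asymp\|\bmu_0\|_\infty\log(1+\nicefrac{n}{2s^2})$ so that the divergence between the mixture and the null stays bounded. Even your $\chi^2$ computation is the same calculation: the paper's \Cref{KL1} bounds the KL divergence by $\frac{s^2}{4n}\big(e^{\sigma^2\varepsilon^2/\mu_0}-1\big)$ via $\log(1+t)\le t$, which is exactly $n\beta^2\chi^2\big(\mathcal P(\lambda_1)\|\mathcal P(\lambda_0)\big)$ with $\beta=\nicefrac{s}{2n}$; and your variance-absorption step (where $s\ge128$ enters) mirrors the paper's bound $\var_{\pi_1}[\bL(\bfM)]\le\frac{s}{2}\sigma^4\varepsilon^2\le 2^{-8}s^2\sigma^4\varepsilon^2$, which feeds the $2\sqrt{M}$ term of its \Cref{lemma_lowerbound}.

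The genuine gap is precisely the point you flag as the main obstacle and then wave through. You take the Bernoulli probability to be $s/n$ (reading the announced proportion ``$(n-s):s$'' literally), so the number $N$ of perturbed columns is $\mathcal B(n,s/n)$ with mean exactly $s$, and $\pi_1(N>s)$ is essentially $\nicefrac12$, not small. Your repair---conditioning $\pi_1$ on $\{N\le s\}$---costs by itself $\|\prob_1-\prob_1^{\mathrm{cond}}\|_{\mathrm{TV}}\le\pi_1(N>s)\approx\nicefrac12$ in total variation, on top of your mixture-vs-null bound $\frac12\sqrt{e^{1/2}-1}\approx 0.40$, leaving a budget $1-\mathrm{TV}$ of barely $0.1$; conditioning also shrinks the separation ($\esp[N\mid N\le s]\le s$, and a rigorous lower bound of the form $s-2\sqrt{s}$ needs its own binomial-tail argument) and leaves a conditional-variance term to subtract. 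These corrections are not ``absolute constant'' bookkeeping: whether the pieces close at all when $s=128$ is a matter of razor-thin numerics that you never perform, and they certainly do not ``produce the factor $1/513$'' as asserted. The paper removes the difficulty at the source: it takes the mixture probability to be $\nicefrac{s}{2n}$, so that $N$ has mean $\nicefrac{s}2$ and Bernstein's inequality gives $\pi_1(N>s)\le e^{-3s/5}\le e^{-76}$ for $s\ge128$ (the second place where $s\ge128$ is used). Then no conditioning is needed: \Cref{lemma_lowerbound} runs the testing argument with the unconditioned mixture and simply subtracts the exponentially small out-of-class mass from the constant $\nicefrac18$, while the randomness of the functional under the prior is absorbed into the separation condition through the $2\sqrt{M}$ term. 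Halving the mixture probability costs only a factor $2$ in the separation and nothing in the rate, and it is what makes the constant $1/513$ actually computable.
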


\begin{proof}
	We will make use of some lemmas, the statement of which requires additional notation. 
	As usual in lower bounds, we will make use of the Bayes risk: for every measure $\pi$ 
	on the set of $p\times n$ real matrices, we define the probability measure  
	\begin{align}
		\bfP_\pi(\,\cdot\,) = \int_{\RR^{p\times n}} \bfP_\bfM(\,\cdot\,)\,\pi(d\bfM).
		\end{align}
	Let $\bF:\RR_+^{p\times n}\to\RR^m$ be any functional, not necessarily linear, defined on the set of matrices.
	
	\begin{lemma}\label{lemma_lowerbound}
		Assume that $\pi_0$ and $\pi_1$ are two probability measures on the set of $p\times n$ matrices, $\RR_+^{p\times n}$, and set 
		$M = \max_i \var_{\pi_i}(\bF)$. Furthermore, suppose 	that $\KL(\bfP_{\pi_1},\bfP_{\pi_0}) \leq \nicefrac18$ and, for some $v>0$,
		\begin{equation}
		\big\|\bfE_{\pi_0}(\bF)-\bfE_{\pi_1}(\bF)\big\|_2>
		2\big(v+2\sqrt{M}\big). 
		\end{equation}
		Then, for any estimator $\hat\bF$ of $\bF(\bfM)$, we have
		\begin{equation}
		\sup_{\bfM\in\mathcal M} \esp_\bfM\big\|\hat{\bF}-\bF(\bfM)\big\|_2^2 \ge 
		\big( \nicefrac18 - 
		\max_i\pi_i(\mathcal{M}^{\complement})\big)\,v^2.
		\end{equation}		
	\end{lemma}

	\begin{lemma}\label{KL1}
		Let $\mu_0>0$ and $\epsilon>0$ be two real numbers.	Define $\pi_0 = \delta_{\mu_0}^{\otimes n}$ as the Dirac 
		mass concentrated in 
		$(\mu_0,\ldots,\mu_0)\in\RR_+^{n}$ and $\pi_1= \lambda^{\otimes n}$ as the product 
		measure of the mixture
		$$
		\lambda = \Big(1-\frac{s}{2n}\Big) \delta_{\mu_0} + \Big(\frac{s}{2n}\Big)\delta_{\mu_0+\sigma^2\varepsilon}.
		$$ 
		Then
		\begin{equation}
		\KL(\bfP_{\pi_1}\|\bfP_{\pi_0}) \le \frac{s^2}{4n}\big(e^{\sigma^2\varepsilon^2/\mu_0}-1\big).
		\end{equation}
	\end{lemma}

	We are now in a position to establish the lower bound of \Cref{theorem_lowerbound1}. 
	%{\color{blue}First notice that it is enough to reduce the %problem to the estimation of the $j^*$-th 
	%entry of $\bL(\bfM)$, where $j^*\in\arg\min_j (\bmu_0)_j$. So %we deal with the one dimensional
	%setting $p=1$.}
	For simplicity and without loss of generality, let us assume that the first 
	entry of $\bmu_0$, $\mu_{01}$, is the largest one. We choose two probability 
	measures $\pi^p_0$ and $\pi^p_1$ on the set of $p\times n$ (intensity) matrices 
	with positive entries, $\RR_+^{p\times n}$, in the following way. The measure 
	$\pi^p_0$ is simply the Dirac mass at $\bfM_0=(\bmu_0,\ldots,\bmu_0)$. As for 
	$\pi^p_1$, its marginal distribution of the first row equals the measure	
	$\pi_1$ defined in \Cref{KL1},  whereas all the other rows are deterministically 
	equal to the corresponding rows of the matrix $\bfM_0$. So, for two matrices 
	$\bfM$ and $\bfM'$ randomly drawn from $\pi_0^p$ and $\pi_1^p$, all the rows 
	except the first are equal (and equal to the corresponding row of $\bfM_0$).
	This implies that choosing the real $\varepsilon$ by the formula 
	\begin{align}
	    \varepsilon^2 = {(\mu_{01}/{\sigma^2})\log(1+\nicefrac{n}{2s^2})},
	\end{align}
	we get (in view of \Cref{KL1}) $\KL(\bfP_{\pi^p_1}\|\bfP_{\pi_0^p})\le 1/8$.

	Furthermore, we have
	\begin{equation}
	\esp_{\pi_0}[\bL(\bfM)]=\mathbf 0_p, \quad \esp_{\pi_1}[\bL(\bfM)]=
	\frac{1}{2}\,s\sigma^2\begin{bmatrix} \varepsilon\\ \mathbf 0_{p-1} \end{bmatrix},
	\end{equation}
	and
	\begin{equation}
	\var_{\pi_0}[\bL(\bfM)]=0, \quad \var_{\pi_1}[\bL(\bfM)] = 
	\frac12\Big(1-\frac{s}{2n}\Big)s\sigma^4 \varepsilon^2.
	\end{equation}
	This means that, after upper bounding $s$ by $s^2/128$, the constant $M$ of \Cref{lemma_lowerbound} 
	can be chosen as $M= 2^{-8}s^2\sigma^4\varepsilon^2$. Hence, for $v=(\nicefrac18)\sigma^2s\varepsilon$, we have 
	\begin{align}\label{cond1}
	    \big\|\esp_{\pi_0}[\bL(\bfM)]-\esp_{\pi_1}[\bL(\bfM)]\big\|_2
	    &= \frac{s\sigma^2\varepsilon}{2} = 2v + 4\sqrt{M}.
	\end{align}
	This implies that the two conditions of \Cref{lemma_lowerbound} are fulfilled
	with
	$$
	v = \frac{\sigma^2s\varepsilon}{8} = \frac{\sigma s\sqrt{\mu_{01}}}{8}\,\log^{1/2}(1+\nicefrac{n}{2s^2}).
	$$	
	Therefore, taking into account the fact that $\pi^p_0(\calM(n,p,s,\bmu_0,\mu_\infty))=1$, we have
	\begin{align}
	\sup_{\bfM\in\mathcal M(n,p,s,\bmu_0,\mu_\infty)} \esp_\bfM\big\|\hat{\bL}-\bL(\bfM)\big\|_2^2 \ge 
		\big( \nicefrac18 - 
		\pi_1^p(\calM(n,p,s,\bmu_0,\mu_\infty)^{\complement})\big)\,v^2.
	\end{align}
	To evaluate the probability $\pi_1^p(\calM(n,p,s,\bmu_0,\mu_\infty)^{\complement})$, 
	we remark that 
	it is equal to $\bfP(\zeta >s)$, where $\zeta$ is a random variable drawn from the binomial distribution
	$\mathcal B(n,(\nicefrac{s}{2n}))$. Using the Bernstein inequality \citep[p.\ 440]{ShorackWellner86book},  
	one can check that $\bfP(\zeta >s)\le e^{-3s/5}\le e^{-76}$, provided that $s\ge 128$. This completes 
	the proof.
\end{proof}

It follows from \Cref{theorem_lowerbound1} that the 
factor $s^2$ present in the upper bound of the 
minimax risk is unavoidable. The next result establishes another
lower bound showing the the term $sp$ present in the upper bound is unavoidable as well. 

\begin{theorem}\label{theorem_lowerbound2}
	For all $1\le s\le n$, if $p\ge16$, then
	\begin{equation}
		\inf_{\hat\bL} \sup_{\bfM\in\calM(n,p,s,\bmu_0,\mu_\infty)} \esp_\bfM \big\| \hat\bL - \bL(\bfM) \big\|^2 
		\ge 2^{-14}{\mu_\infty\sigma^2sp}.
	\end{equation}
\end{theorem}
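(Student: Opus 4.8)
The plan is to derive the $sp$ lower bound by restricting attention to a submodel in which the difficulty separates across the $p$ coordinates, and then to exploit independence through a hypercube (Assouad-type) argument. The factor $p$ cannot be produced by the two-fuzzy-hypotheses device of \Cref{lemma_lowerbound} alone: a deterministic shift of all $p$ coordinates with total Kullback--Leibler divergence kept below a constant forces, by the Poisson approximation $\KL\approx\delta^2/(2\theta)$, a mean gap of order $\sigma\sqrt{s\mu_\infty}$ that is independent of $p$. The accumulation of the variance over coordinates is exactly what a genuine multi-hypothesis construction captures.

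First I would reduce the problem. Consider the subfamily of $\calM(n,p,s,\bmu_0,\mu_\infty)$ in which the first $s$ columns are all equal to a common vector $\bmu\in[\mu_\infty/4,3\mu_\infty/4]^p$ and the remaining $n-s$ columns equal $\bmu_0$; the minimax risk over $\calM$ dominates the minimax risk over this subfamily. For such matrices the statistic $\bY=\sum_{i=1}^s\bX_i$ is distributed as $\sigma^2\mathcal P_p(\sigma^{-2}s\bmu)$ and is sufficient for $\bmu$ (conditionally on $\bY$ the individual column counts are multinomial, free of $\bmu$), while the background columns are ancillary. Writing $\bnu=s\bmu\in[s\mu_\infty/4,3s\mu_\infty/4]^p$, estimating $\bL(\bfM)=s(\bmu-\bmu_0)$ becomes equivalent to estimating the Poisson mean $\bnu$ from the single observation $\bY\sim\sigma^2\mathcal P_p(\sigma^{-2}\bnu)$, where $Y_j$ has variance $\sigma^2\nu_j$ of order $\sigma^2 s\mu_\infty$.

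Next, since $\bY$ has independent coordinates, the parameter set is a box, and the loss $\|\cdot\|_2^2$ is separable, the minimax risk splits into a sum of $p$ one-dimensional minimax risks (equivalently one invokes Assouad's lemma on the cube $\{0,1\}^p$, or compares to the Bayes risk of a product prior). For each coordinate I would use the two-point hypothesis $\nu_j\in\{\bar\nu-h/2,\ \bar\nu+h/2\}$ centred at $\bar\nu=s\mu_\infty/2$, and bound the Hellinger distance between $\mathcal P(\sigma^{-2}(\bar\nu-h/2))$ and $\mathcal P(\sigma^{-2}(\bar\nu+h/2))$. A direct computation gives a squared Hellinger distance of order $h^2/(\sigma^2 s\mu_\infty)$, so choosing $h\asymp\sigma\sqrt{s\mu_\infty}$ keeps the two laws indistinguishable while separating the means by $h$; Le Cam's two-point bound then yields a one-dimensional risk of order $\sigma^2 s\mu_\infty$. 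Summing over the $p$ coordinates produces the announced rate $\mu_\infty\sigma^2 sp$, the explicit constant $2^{-14}$ absorbing the Hellinger-affinity factor, the loss incurred by capping $h$ so that $\nu_j$ stays in the box, and the mild restriction $p\ge16$ used to tidy up constants.

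The hard part will be the per-coordinate Poisson comparison: I must control the Hellinger (or Kullback--Leibler) distance between two Poisson laws whose intensities $\sigma^{-2}(\bar\nu\pm h/2)$ are large and differ by $\sigma^{-2}h$, and simultaneously take $h$ as large as the separation/indistinguishability trade-off allows subject to positivity and the constraint $\bmu\in(0,\mu_\infty]^p$. This is where the interplay between $\sigma^2$, $s$ and $\mu_\infty$ must be tracked carefully, so that $h\asymp\sigma\sqrt{s\mu_\infty}$ genuinely fits inside the admissible range, and it is the place where the assumptions relating $\sigma$ to $\mu_\infty$ and the condition $p\ge16$ are consumed. By contrast, the reduction to $\bY$ by sufficiency and the independence-based splitting are routine once stated precisely.
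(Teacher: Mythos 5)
Your proposal is correct, but it takes a genuinely different route from the paper. The paper uses essentially the same submodel you do (the first $s$ columns all equal to a common vector obtained by perturbing $\mu_\infty$ coordinatewise, the remaining columns equal to $\bmu_0$), but it does not split the risk across coordinates: it builds hypotheses $\bfM_T$ indexed by subsets $T\subset[p]$, extracts via the Varshamov--Gilbert bound a family of $e^{p/8}+1$ sets with pairwise separation $|T_i\bigtriangleup T_j|\ge p/8$, bounds $\KL(\bfP_{\bfM_T}\|\bfP_{\bfM_0})\le sp\varepsilon^2\mu_\infty/\sigma^2$ with $\varepsilon^2=2^{-7}\sigma^2/(s\mu_\infty)$, and concludes by the Fano-type Theorem~2.5 of Tsybakov; the hypothesis $p\ge 16$ is consumed precisely there, in the constants $(1-e^{-p/16})(1-\nicefrac18-1/\sqrt{p})$. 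You instead first reduce by sufficiency (Rao--Blackwell, valid since the loss is convex and the conditional law of the data given $\bY=\sum_{i\le s}\bX_i$ is parameter-free) to a single observation $\bY\sim\sigma^2\mathcal P_p(\sigma^{-2}s\bmu)$, then run an Assouad/product-two-point-prior argument, using the exact Poisson Hellinger affinity $\exp\{-(\sqrt{\lambda_1}-\sqrt{\lambda_2})^2/2\}$ per coordinate; the product structure of the prior and of the likelihood makes the Bayes risk additive over coordinates, which legitimizes your ``sum of $p$ one-dimensional problems'' step. Both arguments deliver the rate $\mu_\infty\sigma^2 sp$; your route avoids Varshamov--Gilbert and Fano entirely, makes the condition $p\ge16$ unnecessary (Assouad works for every $p\ge1$), and with the natural choices (e.g.\ $h=\sigma\sqrt{s\mu_\infty}/4$, total variation at most $\nicefrac14$ per coordinate) actually yields a constant better than $2^{-14}$; what the paper's route buys is that it needs no sufficiency or independence bookkeeping, only a KL computation for the joint law, which is why it transfers verbatim to settings without a common value for the $s$ active columns. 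One caveat, which you flag and which is shared by the paper: your choice $h\asymp\sigma\sqrt{s\mu_\infty}$ must satisfy $h/2\le s\mu_\infty/4$ to keep the intensities in the box, and the paper likewise needs $\varepsilon\le1$ for $\mu_\infty(1-\varepsilon)>0$; both amount to $\sigma^2\lesssim s\mu_\infty$, a condition absent from the statement of \Cref{theorem_lowerbound2} but genuinely necessary for its validity, since otherwise the trivial estimator $\hat\bL=\mathbf 0_p$, whose risk is at most $s^2p\mu_\infty^2$, would contradict the claimed bound. So on this point you lose nothing relative to the paper.
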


\begin{proof}
	We set $\varepsilon^2=2^{-7}(\nicefrac{\sigma^2}{s\mu_\infty})$ and, for any $T\subset[p]$, define the matrix 
	$\bfM_T$ by
	\begin{equation}
		\big(\bfM_T\big)_{i,j} = 
		\begin{cases}
			\mu_\infty(1-\varepsilon), &\quad \text{ if } (i,j)\in T\times [s] \\
			\mu_\infty, &\quad \text{ if } (i,j)\in T^\complement\times [s], \\
			\mu_{0,i}, & \quad j\not\in [s].
		\end{cases}
	\end{equation}
	We also set $\bfM_0=\bfM_{\varnothing}$. It is clear that all these matrices $\bfM_T$ 
	belong to the sparsity class $\calM(n,p,s,\bmu_0,\mu_\infty)$. In addition, for any $T$, 
	we have
	\begin{equation}
		\KL\big(\bfP_{\bfM_T}\|\bfP_{\bfM_0}\big) 
			= \frac{s\mu_\infty}{{\sigma^2}} \sum_{j\in T} \Big[ {(1-\varepsilon)}\log(1-\varepsilon) + \varepsilon \Big] 
			\le \frac{sp\varepsilon^2\mu_\infty}{\sigma^2}.
	\end{equation}
	According to the Varshamov-Gilbert bound \citep[Lemma~2.9]{Tsybakov2009}, there exist 
	$ m= [e^{p/8}]+1$ sets $T_0,\ldots,T_m\subset[p]$ such that $T_0=\varnothing$ 
	and\footnote{In what follows, $\bigtriangleup$ stands for the symmetric difference of two sets.}
	\begin{equation}
		i\neq j \quad \Longrightarrow \quad |T_i\bigtriangleup T_j| \ge \frac{p}8.
	\end{equation}
	With this choice of sets, we have for all $i\neq j$,
	\begin{equation}
		\big\|\bL\big(\bfM_{T_i}\big)-\bL\big(\bfM_{T_j}\big)\big\|^2 
			\ge {s^2|T_i \bigtriangleup T_j|\varepsilon^2\mu_\infty^2} 
			\ge \frac{s^2p\varepsilon^2\mu_\infty^2}{2^3}.
	\end{equation}
	Finally, using Theorem~2.5 in~\citep{Tsybakov2009} with $\alpha = 1/16$, we get
	\begin{align}
		\inf_{\hat\bL} \sup_{\bfM\in\calM(n,p,s,\mu_0,\mu_\infty)} \esp_\bfM \big\| \hat\bL - \bL(\bfM) \big\|^2 
		&\ge (1-e^{-p/16})\times\Big(1-\frac1{8}-\frac{1}{\sqrt{p}}\Big) 
				\frac{s^2p\varepsilon^2\mu_\infty^2}{2^5}\\
		&\ge 2^{-14}{\sigma^2sp\mu_\infty}.
	\end{align}
	and the result follows.
\end{proof}

\begin{figure}
\includegraphics[width = 0.45\textwidth]{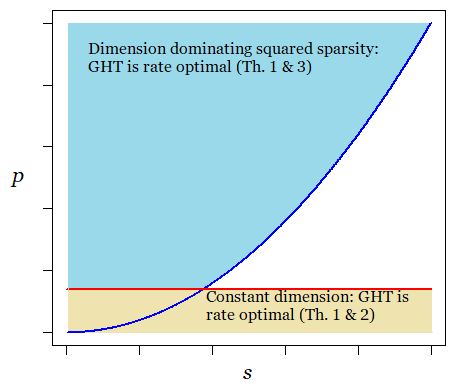}
\caption{The rate optimality diagram in the minimax sense for the group hard thresholding estimator.}
\label{fig:1}
\end{figure}

\section{Concluding remarks and outlook}\label{sec:conc}

%\paragraph{Rate optimality}
Combining the results of the previous sections one can draw the optimality diagram, see
Fig.~\ref{fig:1}. In particular, \Cref{theorem_HT} and  
\Cref{theorem_lowerbound1} imply that the group hard thresholding estimator  \eqref{hatS}, 
\eqref{GHT} with the adaptively chosen threshold  \eqref{lambda} is rate optimal when the 
term $sp$ dominates $s^2\sqrt{p}\,\log^{3/2}(2np)$. This corresponds to the ``dimension 
dominates squared sparsity'' regime $p\ge s^2\log^3(2np)$. According to \Cref{theorem_HT} 
and \Cref{theorem_lowerbound2},  the estimator defined by  \eqref{hatS}, \eqref{GHT}, 
\eqref{lambda} is rate optimal in the low dimensional setting  $p = O(1)$ as well. The 
range of values of  $p$ in which the determination of the minimax rate remains open 
corresponds to the regime $p\to\infty$ so that $p/s^2 = O(1)$. 

In a companion paper \citep{CollierDal2}, the problem of estimating the linear functional 
$\bL(\bfM)$ is considered, when the observations are Gaussian vectors with unknown means 
$\bmu_i$. It turns out that in the Gaussian setting the optimal scaling of the minimax 
risk is $ps+ s^2$, up to a logarithmic factor. However, this rate is achieved by an estimator 
that has a computational complexity that scales exponentially with $n$. Furthermore, in the 
Gaussian model, the upper bound relies heavily on the fact that the Gaussian distribution is 
sub-Gaussian. The fact that the Poisson distribution is not sub-Gaussian (even though it is 
sub-exponential), does  not allow us to apply the methodology developed in the Gaussian model. 

It is worth mentioning here that although the risk bounds (both lower and upper) depend
on the sparsity level $s$ and the largest intensity $\mu_\infty$, the group hard thresholding
estimator is completely adaptive with respect to these parameters. 

The problem we have considered in this work assumes that the noise variance $\sigma^2$ and the
background signal $\bmu_0$ are known. It is not very difficult to check that the result of \Cref{theorem_HT} can be extended to the case where $\bmu_0$ is unknown, but an additional 
data set $\bar\bX_1,\ldots,\bar\bX_m$ is available, such that $\bar\bX_i$'s are iid vectors drawn from the distribution $\sigma^2\mathcal P_p(\bmu_0/\sigma^2)$. One can then estimate the
background signal $\bmu_0$ by the empirical mean of $\bar\bX_i$'s. As for $\sigma$, one can easily estimate this parameter using the fact that the entries of the data matrix $\bfX$ take their 
values in the set $\sigma^2\NN$.
Next, these estimates can be plugged in the group hard thresholding estimator. If $m$ is large enough, the error of estimating $\bmu_0$ and $\sigma^2$ is negligible with respect to the error of estimating $\bL(\bfM)$. Determining more precise risk bounds clarifying the impact of $m$ on the risk is an interesting avenue for future research.

\section{Proofs of the lemmas}\label{ssec:lemproofs}

\begin{proof}[Proof of \Cref{lem:1}]
Let us introduce an auxiliary real number $\gamma >1.5^2$ and denote 
\begin{align}
Z_j = (\eta_j-\nu_j-0.5)^2,\qquad Z_{j,\gamma} = Z_j\wedge \gamma,\qquad
\bar Z_{j,\gamma} = (Z_j-\gamma)_+
\end{align}
as well as
\begin{align}
\nu_{j,\gamma} = \bfE[Z_{j,\gamma}],\qquad \bar\nu_{j,\gamma} = \bfE[\bar Z_{j,\gamma}].
\end{align}
It is clear that $\nu_{j,\gamma} + \bar \nu_{j,\gamma} = \nu_j+0.25$ and we have
\begin{align}
\|\eeta-\bnu\|_2^2-\|\eeta\|_1 
		&= \sum_{j\in[p]} (Z_{j,\gamma}-\nu_{j,\gamma}) + \sum_{j\in[p]} (\bar Z_{j,\gamma}-\bar\nu_{j,\gamma})\\
		& = \|\bZ_{\gamma}\|_1 - \|\bnu_{\gamma}\|_1 + \|\bar\bZ_{\gamma}\|_1 - \|\bar\bnu_{\gamma}\|_1.\label{Z1'}
\end{align}
Since the random variables $Z_{j,\gamma}$ are independent and bounded, with 
$\bfP(Z_{j,\gamma}\in[0,\gamma])=1$, the Hoeffding inequality yields
\begin{align}\label{Z2'}
\bfP\Big(\|\bZ_{\gamma}\|_1-\|\bnu_{\gamma}\|_1 > t\Big) \le e^{-2t^2/(p\gamma^2)}.
\end{align}
On the other hand, it is clear that
\begin{align}
\bfP\Big(\sum_{j\in[p]}\nolimits (\bar Z_{j,\gamma}-\bar\nu_{j,\gamma}) > 0\Big) 
	&\le \bfP\big(\max_{j\in[p]}\nolimits Z_j> \gamma\big)\\
	&\le p\/\max_{j\in[p]}\bfP\big(|\eta_j-\nu_j-0.5|> \sqrt{\gamma}\big)\\
	&\le p\/\max_{j\in[p]}\bfP\big(|\eta_j-\nu_j|> \sqrt{\gamma}-0.5\big)\\
	&\le p\/\max_{j\in[p]}\bfP\big(|\eta_j-\nu_j|> (2/3)\sqrt{\gamma}\big).
\end{align}
Using the Bennet inequality for the Poisson random variables---see, for instance, \cite[Problem 15]{Pollard}--- 
we get\footnote{We use here the fact that the Bennet $\psi^*$ satisfies $\psi^*(x)\ge 0.772$ when $x\in[0,1]$.}
\begin{align}
\bfP\big(|\eta_j-\nu_j|> (2/3)\sqrt{\gamma}\big)
	&\le \bfP\big(\eta_j> \nu_j+ (2/3)\sqrt{\gamma}\big) + \bfP\big(\eta_j<\nu_j-(2/3)\sqrt{\gamma}\big)\\
	&\le e^{-1.54\gamma/(9\nu_j)}+e^{-2\gamma/(9\nu_j)}\le 2e^{-\gamma/(6\nu_j)},\qquad \forall \gamma\le (9/4)\nu_\infty^2.\label{Z3'}
\end{align}
Combining \eqref{Z1'}, \eqref{Z2'} and \eqref{Z3'}, we get
\begin{align}
\bfP\Big(\|\eeta-\bnu\|_2^2-\|\eeta\|_1\ge t\Big) 
		\le  \inf_{\gamma\in[(3/2)^2,(9/4)\nu_\infty^2]} \Big\{e^{-2t^2/(p\gamma^2)} + 2pe^{-\gamma/(6\nu_\infty)}\Big\}.
\end{align}
Let us set $t=\nu_\infty\sqrt{p}\,u$ and $\gamma^3=12\, u^2 \nu_\infty^3$. For $u$ such that 
$\nu_\infty^{-3/2}\le u\le 0.9\nu_\infty^{3/2}$,  the aforementioned value of $\gamma$ belongs 
to the range over which the infimum of the last display is computed. This yields
\begin{align}
\bfP\Big( \|\eeta-\bnu\|_2^2-\|\eeta\|_1\ge \nu_\infty\sqrt{p}\,u\Big) 
		\le  (2p+1)e^{-(12)^{1/3}u^{2/3}/6},
\end{align}
and the claim of the lemma follows.
\end{proof}

	\begin{proof}[Proof of \Cref{lemma_lowerbound}]
		Using Markov's inequality, we get
		\begin{align}
		\sup_{\bfM\in\mathcal{M}} v^{-2}\bfE_\bfM\big\|\hat{\bF}-\bF(\bfM)\big\|_2^2 
				&\ge \sup_{\bfM\in\mathcal{M}} \bfP_\bfM\big(\|\hat{\bF}-\bF(\bfM)\|_2\ge v\big) \\
				&\ge \max_{i=0,1} \bfP_{\pi_i}\big(\|\hat{\bF}-\bF(\bfM)\|_2\ge v\big) 
				- \max_{i=0,1} \pi_i(\mathcal{M}^{\complement}).
		\end{align}
		Now, using the test statistic $\psi=\arg\min_{i\in\{0,1\}} \big\|\hat{\bF}-\esp_{\pi_i}(\bF)\big\|_2$ and the notation
		$M=\max_i \var_{\pi_i}(F) = \max_i \bfE_{\pi_i}\|\bF-\bfE_{\pi_i}[\bF]\|_2^2$, we have
		\begin{align}
		\bfP_{\pi_i} (\psi\neq i) 
				&= \bfP_{\pi_i} \Big(\big\|\hat{\bF}-\esp_{\pi_{1-i}}(\bF)\big\|_2\le \big\|\hat{\bF}-\esp_{\pi_i}(\bF)\big\|_2\Big) \\
				&\le \bfP_{\pi_i} \Big(\big\|\hat{\bF}-\esp_{\pi_i}(\bF)\big\|_2 \ge v+\sqrt{M/\alpha}\Big).
		\end{align}
		Finally, using the triangle inequality again,
		\begin{align}
		\bfP_{\pi_i} (\psi\neq i) &\le \bfP_{\pi_i} \Big( \|\hat{\bF}-\bF(\bfM)\|_2 \ge v\Big) + \bfP_{\pi_i} \Big( \big\|\bF(\bfM)-\esp_{\pi_i}(\bF)\big\|_2 > \sqrt{\alpha^{-1} M} \Big) \\
		&\le  \alpha + \bfP_{\pi_i} \Big( \|\hat{\bF}-\bF(\bfM)\|_2 \ge v \Big),
		\end{align}
		so that
		\begin{equation}
		\sup_{\bfM\in\mathcal{M}} \bfP_\bfM\big(\|\hat{\bF}-\bF(\bfM)\|_2\ge v\big) \ge 
		\max_{i=0,1} \bfP_{\pi_i}(\psi\neq i) -\alpha - \max_{i=0,1} \pi_i(\mathcal{M}^{\complement}).
		\end{equation}
		The result is now a direct application of Theorem~2.2 in \citet*{Tsybakov2009} using the assumption 
		on the Kullback-Leibler divergence.
	\end{proof}

	\begin{proof}[Proof of \Cref{KL1}]
		Let us denote by  $\bfQ_{\mu}$, for every vector $\mu\in(0,\infty)$, the Poisson distribution 
		$\mathcal P(\sigma^{-2}\mu)$, and by $\bfQ_\lambda = \int_{(0,\infty)} \bfQ_{\mu}\,\lambda(d\mu)$, 
		the $\lambda$-mixture of Poisson distributions. We have, for every $X\in\mathbb N$,
		\begin{align}
		\frac{d\,\bfQ_{\lambda}}{d\,\bfQ_{\mu_0}}(X) 
			&= 1 + \frac{s}{2n} \Big(\frac{d\,\bfQ_{\mu_0+\sigma^2\epsilon}}{d\,\bfQ_{\mu_0}}(X) - 1 \Big)\\
			&= 1 + \frac{s}{2n} \Big( \Big\{1+\frac{\sigma^2\varepsilon}{\mu_0}\Big\}^{X}
			e^{-\varepsilon} - 1 \Big).
		\end{align}
		Using the inequality $\log(1+t)\le t$, we get
		\begin{equation}
		\KL(\bfQ_\lambda\|\bfQ_{\mu_0}) 
			\le \frac{s}{2n}\int_\NN \Big\{1+\frac{\sigma^2\varepsilon}{\mu_0}\Big\}^{X}
			e^{-\varepsilon}\,\bfQ_\lambda(dX)
			- \frac{s}{2n}.
		\end{equation}
		On the other hand, for every $\mu\in(0,\infty)^p$, we have
		\begin{align}
		\int \Big\{1+\frac{\sigma^2\varepsilon}{\mu_0}\Big\}^{X}\,\bfQ_{\mu}(dX) 
				&= \sum_{k\in\NN} \Big\{1+\frac{\sigma^2\varepsilon}{\mu_0}\Big\}^{k}
				\frac{\mu^{k}}{\sigma^{2k}k!} 
					e^{-\mu/\sigma^2}
				=  \exp\big\{\nicefrac{\mu\varepsilon}{\mu_0}\big\},
		\end{align}
		so that
		\begin{align}
		\KL(\bfQ_\lambda\|\bfQ_{\mu_0}) 
		    &\le \frac{s}{2n}\Big(1-\frac{s}{2n}\Big)
		+\Big(\frac{s^2}{4n^2}\Big)
		 e^{\sigma^2\epsilon^2/\mu_0}-\frac{s}{2n}\\		
		&=\frac{s^2}{4n^2}\big(e^{\sigma^2\varepsilon^2/\mu_0}-1\big).
		\end{align}
		The result follows since, by independence,
		$\KL(\bfP_{\pi_1}\|\bfP_{\pi_0}) = n\KL(\bfQ_\lambda\|\bfQ_{\mu_0})$.
	\end{proof}

\section*{Acknowledgments}
The authors thank an anonymous referee for his/her careful reading of the paper and 
fruitful suggestions. O.\ Collier's research has been conducted as part of the project 
Labex MME-DII (ANR11-LBX-0023-01). 
The work of A. Dalalyan was partially supported by the grant Investissements d'Avenir 
(ANR-11IDEX-0003/Labex Ecodec/ANR-11-LABX-0047).

%\begin{theo}
%\label{thinfnorm}
%For any $\e>0$,
%\begin{equation}
%\prob\left(\Vert\hat\bfSigma_{\circ}-\bfSigma \Vert_{\infty,\infty}>\e\right)\le ...
%\end{equation}
%\end{theo}
%\noindent{\sc Proof} --

\bibliography{refs}
%\bibliographystyle{spbasic}
%\end{document}
\end{document}